%
%
%


\documentclass{amsproc}

\usepackage{amssymb}

\usepackage{graphicx}


\usepackage{}


\newtheorem{theorem}{Theorem}[section]
\newtheorem{lemma}[theorem]{Lemma}
\newtheorem{proposition}[theorem]{Proposition}
\newtheorem{corollary}[theorem]{Corollary}

\theoremstyle{definition}

\newtheorem{example}[theorem]{Example}

\theoremstyle{remark}
\newtheorem{remark}[theorem]{Remark}

\numberwithin{equation}{section}

\def\mbi#1{\boldsymbol{#1}} 
 
\def\R{{\boldsymbol{R}}}

\begin{document}

\title{Tropical geometry of PERT}


\author{Masanori Kobayashi}
\address{Department of Mathematics and Information Sciences, Tokyo Metropolitan University, 1-1 Minami-Ohsawa, Hachioji-shi, Tokyo, 192-0397, Japan}
\curraddr{}
\email{kobayashi-masanori@tmu.ac.jp}
\thanks{The first author is supported by Grant-in-Aid for Scientific Research 21540045}

\author{Shinsuke Odagiri}
\address{Department of Mathematics and Information Sciences, Tokyo Metropolitan University, 1-1 Minami-Ohsawa, Hachioji-shi, Tokyo, 192-0397, Japan}
\curraddr{}
\email{odagiris@tmu.ac.jp}
\thanks{The second author thanks the support by the MEXT program ``Support Program for Improving Graduate School Education"}

\subjclass[2010]{Primary 14T05; Secondary 93C65}

\date{February 29, 2012}

\keywords{Tropical geometry, discrete event system, CPM}

\begin{abstract}
Based on a description of project networks by max-plus algebra and poset, 
the adjacency of critical paths is presented using tropical geometry. 
\end{abstract}

\maketitle

\section{Introduction}
The max-plus algebra, also known as the tropical semiring, appears in discrete event systems\cite{BCOQ}. 
For instance, the event firing time of a Petri net is analysed by max-plus linear algebra. 
PERT (Program Evaluation and Review Technique) and CPM (Critical Path Method) are popular methods of scheduling, and the problem of resource conflict is discussed in \cite{KTG}. 

Suppose there is a set of several activities with an order; 
each activity can be started after all the preceding activities have accomplished. 
The activities form a {\it project network\/}.
Each activity $x$ is assigned with a nonnegative number $t_x$ representing the time required for $x$, which is called the {\it time cost} of $x$. 
One usually includes the start activity $u$ and the end activity $y$ both with time cost zero.

Fix a project network. 
A successive series of activities from $u$ to $y$ is called a {\it path\/}. 
For the accomplishment of a path $I$, we need at least the sum of the time costs along $I$. 
Take the maximum 
for all the paths, and you get the {\it earliest finishing time} for the project itself. 
The path attaining the maximum is called a {\it critical path}. 

Since the critical path determines the time necessary to complete all the activities, 
it is important to watch and control the critical path. 
If one can reduce the time cost of an activity on the critical path, 
the total duration becomes shorter. 
On the contrary, an accident on the critical path directly results in the delay of the closing. 

However, if an activity out of the critical path suddenly requires more time, 
or some activities in the critical path reduces their time costs,
the critical path might change. 
If so, which path is likely to be critical?
In this paper, the question is answered geometrically by the adjacency of paths. 

We formulate a project network as a graph and an ordered set in Section 2, 
and show when a tropical polynomial is realised as the earliest finishing time in Section 3. 
The topology of the transition of critical paths is discussed in Sections 4 and 5. 

Acknowledgment: 
We thank to St\'ephane Gaubert and Jean-Jacques Risler for discussion and kindly noticing us good references. 

\section{Graph representation and poset structure of a project network}
A project network is usually represented by a directed graph called a {\it PERT chart\/}, 
and there are two popular ways; 
namely, {\it activity-on-arrow} (AOA) and {\it activity-on-node} (AON) diagrams. 
In the AOA diagram, as imagined literally,
each activity is represented by an arrow (a directed edge). 
A vertex represents a milestone between activities. 
On the other hand, in the AON diagram, 
an activity is represented by a vertex and a dependency between activities by an arrow. 
Both diagrams can represent any project networks, 
but we adopt the AON diagram, 
where the insertion of dummy arrows (e.g. the dotted arrow in Figure 1, with no corresponding activities)
 are not needed. 
\begin{figure}[htbp]
\includegraphics{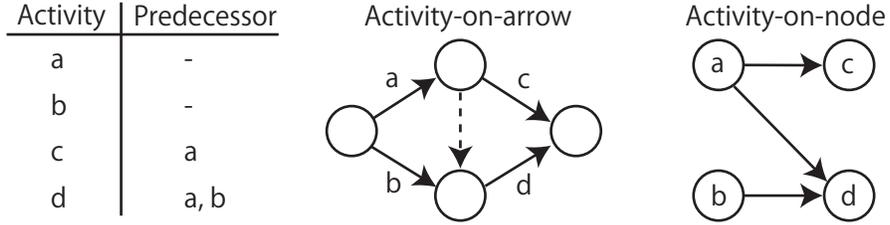}
\caption{PERT charts ($u$ and $y$ are omitted)}
\end{figure}

Every AON diagram is {\it simple}, that is, has no self-loops or multiple arrows. 
Thus an arrow is represented by a pair of two different (namely, initial and terminal) vertices. 
Logically there are no circular references, so the graph contains no cycles. 
By the semipositivity of time costs we may assume 
that an arrow connects only a pair of predecessor-successor (i.e. there are no activities in between); e.g. if $x_0 \to x_1 \to \cdots \to x_k$ ($k \geq 2$) exists then we do not include the arrow $x_0 \to x_k$. 
We call that final condition as having no `{\it short-cuts}'. 

We shall identify such a graph and a partially ordered set (`poset') as follows. 
$X$ is supposed to be the set of activities. 

\begin{proposition}\label{poset-PERT_prop}
Let $X$ be a finite set. 
There is a one-to-one correspondence between the set of partial orders on $X$ and the set of simple directed graphs with vertex set $X$ without cycles or short-cuts. 
\end{proposition}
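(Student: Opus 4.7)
The plan is to construct explicit inverse maps in both directions and then verify that they are indeed inverse to each other. Given a partial order $\leq$ on $X$, I would associate to it its Hasse diagram: the directed graph with an arrow $x \to y$ whenever $x < y$ and no $z$ satisfies $x < z < y$ (a covering relation). Conversely, given a simple directed graph $G$ on $X$ with no cycles and no short-cuts, I would define $x \leq y$ to mean that there exists a directed path in $G$ from $x$ to $y$ (allowing the trivial path $x = y$).

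First I would check that each construction lands in the correct target. For the Hasse diagram: simplicity follows because $\leq$ is antisymmetric (so no self-loops, since covering requires $x < y$ strictly) and covering relations are uniquely determined (no multiple arrows); absence of cycles follows from antisymmetry and transitivity; absence of short-cuts is immediate because any intermediate vertex on a path of length $\geq 2$ from $x$ to $y$ would witness that $x \to y$ is not a covering relation. For the reachability relation: reflexivity is built in, transitivity follows by concatenating paths, and antisymmetry follows from the no-cycle hypothesis.

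The main content is verifying that the two maps are mutually inverse, and the finiteness of $X$ is used here. Starting from a poset, forming its Hasse diagram, and then taking reachability, I recover the original order: any $x < y$ admits, by finiteness, a maximal chain $x = x_0 < x_1 < \cdots < x_k = y$ of covering relations, giving a path in the Hasse diagram; conversely any such path yields $x \leq y$ by transitivity. Starting from a graph $G$, forming its reachability order, and then taking the Hasse diagram of that order gives back $G$: the no-short-cut property guarantees that every arrow $x \to y$ of $G$ is a covering relation (an intermediate $z$ with $x < z < y$ would produce a length-$\geq 2$ path $x \to \cdots \to z \to \cdots \to y$, making $x \to y$ a forbidden short-cut), and conversely any covering pair $x < y$ must be realised as a direct arrow of $G$, since otherwise the shortest $G$-path witnessing $x \leq y$ would have length $\geq 2$ and its intermediate vertices would violate the covering property.

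The step I expect to require the most care is precisely the last one: ensuring the no-short-cut condition exactly matches being a covering relation. It is the only place where the specific graph-theoretic hypothesis of the proposition is doing real work, and it must be traced carefully in both directions so that the correspondence between edges of $G$ and covering pairs of the associated poset is strict. Everything else is a routine verification of the order axioms and of the closure properties of directed paths.
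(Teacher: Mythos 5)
Your proposal is correct and follows essentially the same route as the paper's own proof: Hasse diagram in one direction, reachability order in the other, with the no-short-cut condition matching the covering relations. You simply spell out the mutual-inverse verification in more detail than the paper, which dismisses it as easy.
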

\begin{proof}
First note that for a finite ordered set, $x < x'$ is equivalent to the existence of a finite sequence of predecessor-successors $x=x_0 < \cdots < x_k=x'$. 
This can be done by inductive insertions of intermediate elements if exist. 

Assume a partial order is given to $X$. 
Combining a predecessor and a successor by an arrow, 
one get a simple directed graph without short-cuts. 
The antisymmetric law guarantees the vacancy of cycles. 
The graph is nothing but a Hasse diagram. 

On the contrary, from a given directed graph, 
the reachability along a finite ($ \geq 0$) sequence of arrows defines a partial order. 
The reflectivity and transitivity is obvious. 
$x < x'$ and $x' < x$ does not occur simultaneously since there are no cycles. 

It is easy to see that the compositions of those two maps in both directions are identity maps. 
\end{proof}

Thus, a PERT chart without short-cuts is equivalent to a Hasse diagram of a poset with a minimum element $u$ and a maximal element $y$ such that the vertices are nonnegatively weighted. 
A path is a graph-theoretic path from $u$ to $y$ in the Hasse diagram, 
which consists of a totally-ordered set of vertices. 
Assumption of `no short-cuts' guarantees that the ordered set is maximal; 
if one can add another activity, the insertion must be somewhere in-between, thus a short-cut exists. 
On the contrary, 
suppose a poset $X$ with $u$ and $y$ is given. 
Take a maximal totally-ordered subset $I$. 
Since $I$ is maximal, $I$ contains $u$ and $y$, 
and the predecessor-successor relation on $I$ extends to that on $X$. 
Thus $I$ corresponds to a path of the PERT chart.

We omit $u$ and $y$ from $X$ unless necessary from now on. 

\begin{lemma}\label{embedding_lemma}
Every finite poset $(X,\leq)$ can be embedded into a totally-ordered set $(X, \leq')$ with the same set. 
\end{lemma}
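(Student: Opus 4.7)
The plan is to prove this classical result (a linear extension of a finite poset) by induction on $n=|X|$. The base cases $n=0,1$ are trivial since any order on such a set is already total. For the inductive step, the key observation is that every finite nonempty poset admits at least one minimal element: starting from any $x\in X$ and repeatedly descending to a strict predecessor must terminate in finitely many steps by the finiteness of $X$ and antisymmetry, producing a minimal element $x_0$.

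Given such a minimal $x_0$, apply the inductive hypothesis to the subposet $(X\setminus\{x_0\},\leq)$ to obtain a total order $\leq''$ on $X\setminus\{x_0\}$ extending the restriction of $\leq$. Then define $\leq'$ on $X$ by declaring $x_0 \leq' x$ for every $x\in X$ and using $\leq''$ between the remaining elements. This is manifestly a total order since any two elements are either $x_0$ and something else (comparable by construction) or both lie in $X\setminus\{x_0\}$ (comparable via $\leq''$).

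The only thing to verify is that $\leq'$ extends $\leq$, i.e.\ that $a\leq b$ implies $a\leq' b$. If $a,b\in X\setminus\{x_0\}$, this follows from the inductive hypothesis. If $a=x_0$, this follows from the construction. The remaining case $b=x_0$ with $a\neq x_0$ cannot occur, because then $a<x_0$ would contradict the minimality of $x_0$.

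No step is a real obstacle here; the mild care needed is the existence of a minimal element in a finite poset (which uses only finiteness plus antisymmetry, not totality) and the case analysis checking that extending by placing $x_0$ at the bottom respects the original relations. In fact, the same construction can equivalently be phrased as a ``topological sort'': iteratively remove a minimal element and list it next, producing the totally ordered sequence $x_0 <' x_1 <' \cdots <' x_{n-1}$.
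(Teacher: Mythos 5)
Your proof is correct, but it takes a different route from the paper. You argue by induction on $\#X$, repeatedly extracting a minimal element and placing it at the bottom --- the standard ``topological sort'' construction --- and your case analysis (in particular ruling out $b=x_0$ with $a\neq x_0$ via minimality of $x_0$) is complete. The paper instead gives a single global construction: it adjoins a minimum element $u$, ranks each $x\in X$ by its order distance from $u$ (its height in the poset), and declares $x\leq' x'$ when $x$ is closer to $u$, breaking ties within a level arbitrarily; the key observation is that $x< x'$ forces the distance to $x'$ to be strictly larger, so the identity map is monotone. Your inductive argument is more elementary and self-contained, needing only the existence of a minimal element in a finite poset; the paper's level-by-level ordering avoids induction entirely and produces the linear extension in one pass, which fits naturally with the PERT setting where activities are anyway stratified by how many predecessors must be completed first. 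Both are standard proofs that a finite poset admits a linear extension, and either suffices for the lemma.
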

\begin{proof}
Add a minimum element $u$ to $X$. 
Order the elements of $X$ by the order distance from $u$ with respect to $\leq$, and define $\leq'$ by being closer to $u$ is smaller. 
The order for the elements with same distance is arbitrary. 
If $x \leq x'$, then the distance to $x'$ is greater than or equal to that to $x$.
Thus the identity map is monotone. 
\end{proof}

By the lemma, 
we may always assume that a finite poset of order $n$ is equivalent to $[n]:=\{1,2,\ldots,n\}$ as a set, 
where $i < j$ in the poset order implies $i < j$ as integers. 
The other implication is not necessarily true. 
A totally-ordered set corresponds to a serial PERT chart, 
whereas in the case of a parallel PERT chart, as a poset $[n]$ is an {\it antichain\/}, where distinct elements are not comparable. 


\section{Earliest finishing time as a tropical polynomial}

Let $F$ be the earliest finishing time of a project network 
with $n$ activities. 
We denote the time costs by $t_1,\ldots,t_n$. 
Since the change of states matters, the time costs are treated as variables, not as constants. 

For a subset $I$ of $[n]$, we write the monomial term $\prod_{i \in I} t_i$ as $t_I$. 

\begin{proposition}\label{prerealisable}
The earliest finishing time $F$ can be written as a tropical polynomial of $t_1,\ldots,t_n$ satisfying the following three conditions:
\begin{enumerate}
 \item the degree of $F$ on each variable is at most one,
 \item the coefficient of each term is a unit,
 \item no term is divisible by any other terms. (`nondivisibility')
\end{enumerate}
\end{proposition}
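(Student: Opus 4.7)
The plan is to exhibit $F$ directly as a tropical polynomial indexed by paths and then read off the three conditions from the structural description of paths given in Section~2.

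First, I would write
\[
F \;=\; \bigoplus_{I} t_I,
\]
where the tropical sum (i.e.\ maximum) is taken over all paths $I$ from $u$ to $y$, each such path being viewed, via Proposition~\ref{poset-PERT_prop} and the subsequent discussion, as a maximal totally-ordered subset of the poset $X$. That this equals the earliest finishing time is essentially the definition recalled in the introduction: the time needed to complete $I$ is $\sum_{i \in I} t_i$, and $F$ is the maximum of these over all paths. If one prefers, the same equality can be obtained by induction on the poset structure, unrolling the standard forward recursion $F(x) = \max_{x' \prec x} F(x') + t_x$ starting from $u$ until one reaches $y$, each leaf of the recursion tree corresponding to a maximal chain.

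Given this representation, conditions (1) and (2) are immediate. For (1), each path $I$ is a \emph{set} of activities, so every variable $t_i$ appears with multiplicity at most one in the monomial $t_I = \prod_{i \in I} t_i$. For (2), each monomial $t_I$ in the sum carries classical coefficient $1$, which is the tropical multiplicative unit $0 \in \R$, hence a unit of the semiring.

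The only condition that needs a genuine argument is (3). Here I would invoke the fact, established after Proposition~\ref{poset-PERT_prop}, that a path corresponds to a \emph{maximal} totally-ordered subset of $X \cup \{u,y\}$. Divisibility of monomials in the tropical polynomial ring means inclusion of index sets: $t_J$ divides $t_I$ precisely when $J \subseteq I$. So if two paths $I, J$ satisfied $J \subsetneq I$, then $J$ would be a chain properly contained in the chain $I$, contradicting the maximality of $J$. Hence the index sets of distinct paths are pairwise incomparable under inclusion, which is exactly the nondivisibility condition.

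The main (and only) obstacle is conceptual rather than technical: it is making precise that ``path'' means ``maximal chain,'' so that set-theoretic inclusion among index sets is automatically ruled out. Once Proposition~\ref{poset-PERT_prop} and the no-short-cuts convention are invoked, the three conditions fall out in one line each.
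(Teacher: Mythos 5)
Your proposal is correct and follows essentially the same route as the paper: write $F$ as the tropical sum of monomials $t_I$ over paths, note (1) and (2) are immediate, and derive (3) from the equivalence of monomial divisibility with inclusion of index sets together with the maximality of paths as chains. The extra remarks (the forward recursion, the explicit identification of the tropical unit) are harmless elaborations of the same argument.
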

\begin{proof}
For each path, the sum of the time costs is a tropical monomial, 
and the maximum is represented by the tropical addition. 
(1) follows since every path contains each activity at most once. 
(2) is obvious. 
Thus every term of $F$ can be written as $t_I$ for some $I$,
where $I$ is a maximal totally-ordered subset of $[n]$. 
Suppose a term $t_J$ is divisible by a term $t_I$. 
That condition is equivalent to that $I \subset J$ holds. 
Since $I$ is maximal, $I=J$ follows. 
\end{proof}

Let us think of the inverse problem: what is a sufficient condition for
a tropical polynomial to have a corresponding PERT chart? 

We call a nonconstant tropical polynomial $F$ {\it prerealisable} if
$F$ satisfies the three conditions of Proposition \ref{prerealisable}. 
Prerealisable polynomial can be written in the form of $\sum_{I \in {\mathcal I}} t_I$, where ${\mathcal I}$ is a nonempety subset of the power set $2^{[n]}$ 
which satisfies that if $I,J \in \mathcal I$ and $I \subset J$ then $I=J$. 
Moreover, 
We say that a tropical polynomial is {\it PERT-realisable} (or simply, {\it realisable}) if there exists a PERT chart with the earliest
finishing time being $F$. 
By Proposition \ref{prerealisable}, a realisable tropical polynomial is prerealisable.

\begin{proposition}
Let $F=\sum_{I \in {\mathcal I}} t_I$ be a tropical polynomial of $n$ variables.
Then $F$ is realisable iff there exists
a poset structure on the index set $[n]$ such that 
$$ I \ \text{is a maximal totally-ordered subset} \Leftrightarrow t_I \ \text{is a term of} \ F.$$
\end{proposition}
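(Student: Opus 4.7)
The plan is to reduce the statement to a direct translation between the two sides via Proposition \ref{poset-PERT_prop} and the formula for the earliest finishing time obtained in the proof of Proposition \ref{prerealisable}. The key observation is that both sides of the claimed equivalence are really statements about when the indexing family $\mathcal I$ can be identified with the family of maximal chains of a poset on $[n]$, and Proposition \ref{poset-PERT_prop} already tells us that posets on $[n]$ are in bijection with (short-cut-free) PERT charts on $[n]$.

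For the forward direction, I would assume $F$ is realisable by some PERT chart with vertex set $[n]$ (after adjoining $u,y$ and identifying the activity set with $[n]$ as in Lemma \ref{embedding_lemma}). By Proposition \ref{poset-PERT_prop} this chart corresponds to a partial order on $[n]$. The discussion following Proposition \ref{poset-PERT_prop} shows that the paths of the chart are exactly the maximal totally-ordered subsets $I$ of $[n]$. From the proof of Proposition \ref{prerealisable}, the earliest finishing time is the tropical sum $\sum_I t_I$ over all these paths. Since by prerealisability no two of these monomials are divisible, they are all distinct terms and none cancels, so the set of indices $I$ with $t_I$ a term of $F$ equals the set of maximal chains.

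For the backward direction, suppose a poset on $[n]$ is given with the stated matching between maximal chains and terms of $F$. Invoke Proposition \ref{poset-PERT_prop} again to pass to the associated Hasse diagram; adjoin $u$ as a minimum and $y$ as a maximum with zero time costs to obtain a PERT chart. Its paths, by the same discussion from Section 2, are the maximal totally-ordered subsets of $[n]$, and by hypothesis these are precisely the index sets $I \in \mathcal I$. The earliest finishing time of this chart is therefore $\sum_{I \in \mathcal I} t_I = F$, so $F$ is realisable.

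The main obstacle is essentially bookkeeping rather than conceptual: one has to be careful that after adjoining $u$ and $y$ and taking the Hasse diagram, no short-cuts are introduced and each abstract maximal chain in the poset really does appear as a graph-theoretic path from $u$ to $y$ (and conversely). Both points, however, are already covered by Proposition \ref{poset-PERT_prop} and the paragraph immediately after it, so the argument is little more than stitching those together with the monomial-per-path description of $F$.
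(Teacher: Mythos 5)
Your proposal is correct and follows essentially the same route as the paper: both directions are handled by combining the poset--chart bijection of Proposition \ref{poset-PERT_prop} (and the ensuing identification of paths with maximal totally-ordered subsets) with the path-per-monomial description of the earliest finishing time from Proposition \ref{prerealisable}. The paper's version is merely terser; your extra bookkeeping about adjoining $u$ and $y$ adds no new idea but no error either.
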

\begin{proof}
If $F$ is realisable, then we have the equivalence 
since they both mean that $I$ is a path of the PERT chart.
Suppose that the index set has a poset structure satisfying the given equivalence. 
Then from Proposition \ref{poset-PERT_prop}, 
we have a corresponding PERT chart such that every maximal totally-ordered subset
corresponds to a path and vice versa. 
Thus $F$ is the earliest finishing time of the PERT chart. 
\end{proof}

Prerealisable polynomials are not always realisable as shown below. 

\begin{proposition}
Let $\mathcal I$ be a subset of $2^{[n]} \setminus \{[n]\}$ such that for every $i,j \in [n]$,
there exists $I \in \mathcal I$ including both $i$ and $j$. Then $F=\sum_{I \in \mathcal I} t_I$ is not realisable.
\end{proposition}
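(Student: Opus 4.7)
The plan is to argue by contradiction using the characterisation of realisable polynomials given in the immediately preceding proposition. Suppose $F=\sum_{I\in\mathcal I}t_I$ were realisable. Then there would exist a poset structure on $[n]$ whose maximal totally-ordered subsets are exactly the elements of $\mathcal I$.

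The hypothesis is tailor-made to collide with this. For any $i,j\in[n]$, the assumption supplies some $I\in\mathcal I$ containing both $i$ and $j$; since $I$ is a totally-ordered subset of the poset, $i$ and $j$ must be comparable. As $i,j$ were arbitrary, the poset is in fact a total order on $[n]$.

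But in a totally-ordered set, the entire set is itself the unique maximal chain. Under the assumed correspondence this forces $\mathcal I=\{[n]\}$, contradicting the hypothesis that $\mathcal I\subset 2^{[n]}\setminus\{[n]\}$.

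The argument is short and I do not anticipate any real obstacle: the only point that deserves a line of explanation is why pairwise comparability (a binary condition) promotes to total order on all of $[n]$, which is immediate from the definition. Everything else is a direct invocation of the previous proposition.
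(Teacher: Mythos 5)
Your proposal is correct and follows essentially the same route as the paper: assume realisability, use the preceding characterisation to get a poset on $[n]$, observe that the hypothesis forces every pair of elements to be comparable so the poset is a total order, and derive a contradiction from the fact that the only maximal chain is then $[n]$ itself while every $I\in\mathcal I$ is a proper subset. The paper phrases the final contradiction as violating the maximality of $I$ rather than as forcing $\mathcal I=\{[n]\}$, but these are the same observation.
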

\begin{proof}
Suppose $F$ is realisable. 
Then $[n]$ has a poset structure $\leq$ such that
every $I \in \mathcal I$ is totally ordered.
Then $([n],\leq)$ is totally ordered 
since every two elements are comparable. 
This contradicts to the maximality of $I$.
\end{proof}

Let $n,k$ be positive integers satisfying $k \le n$. 
Take $\mathcal I$ to be $\{I \subset [n] \ | \ \# I =k \}$. 
We write the corresponding tropical polynomial $F$ as $F_{n,k}$. 
Apparently $F_{n,k}$ is prerealisable. 

\begin{corollary}\label{realisable}
$F_{n,k}$ is realisable iff $k=1$ or $n$.
\end{corollary}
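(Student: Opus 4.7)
The plan is to treat the two directions separately, with the ``if'' direction being a direct construction and the ``only if'' direction following from the immediately preceding proposition.

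For the ``if'' direction, I would exhibit, for $k=1$ and $k=n$, an explicit poset on $[n]$ whose set of maximal totally ordered subsets is exactly $\mathcal I = \{I \subset [n] \mid \# I = k\}$, and then invoke the previous proposition (the one characterising realisability in terms of poset structures). For $k=n$, take the total order $1 < 2 < \cdots < n$; the unique maximal totally ordered subset is $[n]$ itself, matching $\mathcal I = \{[n]\}$. For $k=1$, take the antichain structure on $[n]$; the maximal totally ordered subsets are precisely the singletons $\{1\},\ldots,\{n\}$, matching $\mathcal I = \{\{1\},\ldots,\{n\}\}$.

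For the ``only if'' direction, I would show that whenever $1 < k < n$, the family $\mathcal I = \binom{[n]}{k}$ satisfies the hypotheses of the proposition stating non-realisability. First, since $k < n$, no element of $\mathcal I$ equals $[n]$, so $\mathcal I \subset 2^{[n]} \setminus \{[n]\}$. Second, for any two indices $i,j \in [n]$, since $k \geq 2$ and $k \leq n$, one can choose $k-2$ further elements of $[n] \setminus \{i,j\}$ to form a $k$-subset $I \in \mathcal I$ containing both $i$ and $j$. Applying the preceding proposition then yields that $F_{n,k}$ is not realisable.

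I do not expect any real obstacle here: the whole statement is essentially a bookkeeping consequence of the two propositions proved just above, together with the two trivial (serial and parallel) PERT charts. The only care needed is to check that the case $1 < k < n$ indeed satisfies $k \ge 2$ and $k \le n$ so that a $k$-subset through any prescribed pair exists.
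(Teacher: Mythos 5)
Your proposal is correct and follows essentially the same route as the paper: the serial and parallel charts (equivalently, the total order and the antichain posets) handle $k=n$ and $k=1$, and the case $2\le k\le n-1$ is dispatched by the preceding non-realisability proposition, whose hypotheses you verify in exactly the way the paper implicitly relies on. The only difference is that you spell out the verification that every pair $\{i,j\}$ lies in some $k$-subset, which the paper leaves tacit.
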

\begin{proof}
If $k=1$ (resp. $k=n$), then $F_{n,k}$ corresponds to the PERT chart consisting of $n$ parallel (resp. serial) activities. 
On the other hand, if $2 \le k \le n-1$, $F_{n,k}$ is not realisable by the previous proposition.
\end{proof}

A simple nonrealisable example is $F_{3,2}=t_1t_2 \oplus t_2t_3 \oplus t_1t_3$. 

\section{Tropical hypersurface and critical paths}

We consider the topology of the space of paths. 
First we fix the notation. 

We regard $\mbi{t}=(t_1,t_2,\ldots,t_n)$ as the tropical coordinates of $\mbi{R}^n$, 
though we mainly work on the semipositive orthant $\R_{\geq 0}^n$, which we denote by $\varGamma$. 
For $I \subset [n]$, we denote by $\mbi{e}_I$ the element of $\varGamma$ such that 
$t_i$ equals one for $i \in I$ and zero otherwise. 
When $I=\{i\}$, we write $\mbi{e}_{\{i\}}$ as $\mbi{e}_i$. 

For a tropical polynomial $F=F(t_1,\ldots,t_n)$, 
the tropical hypersurface $V(F)$ in $\R^n$ is the locus where more than one terms attain the maximum. 
We denote by $V(F)_{\geq0}$ the intersection $V(F) \cap \varGamma$，
and by $\varGamma_F$ the complement $\varGamma \smallsetminus V(F)_{\geq 0}$.


In the sequel, fix a tropical polynomial 
$F=\sum_{I \in \mathcal I} t_I$ and $I \in \mathcal I$. 
We define a close (resp. an open) convex polyhedral cone $C_I$ (resp. $C_I^\circ$) to be
$\bigcap_{J \in \mathcal I} \{ \mbi{t} \in \varGamma \ | \ t_I \ge t_J\}$ (resp. 
$\bigcap_{J \in \mathcal I} \{ \mbi{t} \in \varGamma \ | \ t_I > t_J\}$). 
Since $V(F)_{\geq 0}=\bigcup_{\substack{J,K\\ J \neq K}} (C_J \cap C_K)$, 
$V(F)_{\geq 0}$ is also a cone. 

\begin{proposition}
If $F$ is prerealisable, the following holds:
\begin{enumerate}
 \item $C^\circ_I$ is a nonempty and $C_I$ is $n$-dimensional.
 \item $C^\circ_I$ is the interior of $C_I$ in $\varGamma$. 
 \item $C_I$ is the closure of $C^\circ_I$ in $\varGamma$. 
\end{enumerate}
\end{proposition}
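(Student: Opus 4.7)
The plan is to exploit the nondivisibility hypothesis (condition~(3) of Proposition~\ref{prerealisable}) as the main lever: for every $J \in \mathcal{I}$ with $J \neq I$, neither $I \subset J$ nor $J \subset I$ can hold, so both set differences $I \setminus J$ and $J \setminus I$ are nonempty.

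For~(1), I would evaluate the affine form $t_I - t_J$ at the point $\mbi{e}_I + \epsilon \mbi{1}$, where $\mbi{1} = (1,\ldots,1)$. A direct computation yields
\[
(\mbi{e}_I + \epsilon \mbi{1})_I - (\mbi{e}_I + \epsilon \mbi{1})_J = |I \setminus J| + (|I| - |J|)\epsilon,
\]
which is strictly positive for every $J \in \mathcal I \setminus \{I\}$ once $\epsilon > 0$ is sufficiently small (using $|I \setminus J| \ge 1$ and finiteness of $\mathcal I$). Since this point has every coordinate strictly positive, continuity of the finitely many forms $t_I - t_J$ provides a Euclidean ball around it sitting inside $C_I^\circ$, giving simultaneously the nonemptiness of $C_I^\circ$ and the $n$-dimensionality of $C_I$.

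For~(3), the inclusion $\overline{C_I^\circ} \subset C_I$ is automatic since $C_I$ is closed as an intersection of halfspaces. For the converse, given $t \in C_I$ I would explicitly deform it inside $C_I^\circ$ by setting $t_\lambda := t + \lambda \mbi{e}_I$ for $\lambda > 0$, so that
\[
(t_\lambda)_I - (t_\lambda)_J = (t_I - t_J) + \lambda \,|I \setminus J|,
\]
which is strictly positive for every $J \neq I$ because $t_I \ge t_J$ and $|I \setminus J| \ge 1$. Hence $t_\lambda \in C_I^\circ$ and $t_\lambda \to t$ as $\lambda \to 0^+$.

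For~(2), one inclusion $C_I^\circ \subset \mathrm{int}_\varGamma(C_I)$ is purely topological: each strict inequality $t_I > t_J$ persists on a small Euclidean ball around any point of $C_I^\circ$, and intersecting with $\varGamma$ yields a relative neighborhood inside $C_I$. The reverse inclusion is where I expect the main obstacle, because a point $t \in \mathrm{int}_\varGamma(C_I)$ may lie on $\partial\varGamma$ and then the naive perturbation direction $\mbi{e}_J - \mbi{e}_I$ can push $t$ out of $\varGamma$. I would resolve this by projecting that direction onto the cone of feasible perturbations at $t$: assuming $t_I = t_J$ for some $J \neq I$, define $v$ by $v_i = +1$ for $i \in J \setminus I$, $v_i = -1$ for $i \in I \setminus J$ with $t_i > 0$, and $v_i = 0$ otherwise. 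Then $t + \epsilon v \in \varGamma$ for all sufficiently small $\epsilon > 0$, while nondivisibility ($|J \setminus I| \ge 1$) gives $(t + \epsilon v)_J - (t + \epsilon v)_I \ge \epsilon > 0$, contradicting $t \in C_I$.
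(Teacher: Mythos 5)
Your proof is correct and follows essentially the same route as the paper: nondivisibility gives $I\setminus J\neq\emptyset$ and $J\setminus I\neq\emptyset$, a point near $\mbi{e}_I$ witnesses (1), the perturbation $\mbi{t}+\lambda\mbi{e}_I$ gives (3), and a perturbation increasing $t_J$ relative to $t_I$ shows boundary points of $C_I$ where $t_I=t_J$ are not in $C_I^\circ$ for (2). The only cosmetic differences are that the paper evaluates at $\mbi{e}_I$ itself for (1), and for (2) it uses the simpler direction $\mbi{e}_j$ for a single $j\in J\setminus I$, which only increases a coordinate and so never leaves $\varGamma$, making your projected direction $v$ unnecessary (though still valid).
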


\begin{proof}
We first prove $(1)$. 
The value of $t_I$ at $\mbi{e}_I$ equals $\# I$, and the value of a term $t_J$ at $\mbi{e}_I$
is $\# (I \cap J)$. Thus $t_I$ is strictly the biggest term of $F$ at $\mbi{e}_I$ by nondivisibility, 
thus $\mbi{e}_I \in C^\circ_I$. 
$C_I$ contains an nonempty open subset $C^\circ_I$, thus is $n$-dimensional. 

We now prove $(2)$. 
Suppose $\mbi{t} \in C_I$ satisfies $t_I=t_J$ for some distinct $I,J$ in $\mathcal I$. 
By nondivisibility, there exists $j \in J \smallsetminus I$. 
Then for any $\varepsilon >0$, 
$\mbi{t}+\varepsilon \mbi{e}_j$ is in $\varGamma$ but not in $C_I$ since $t_I < t_J$. 
This shows that $\mbi{t}$ is a boundary point of $C_I$ in $\varGamma$, 
and the interior of $C_I$ in $\varGamma$ is contained in $C^\circ_I$. 
$C^\circ_I$ is open in $\varGamma$, thus is the interior of $C_I$. 

For (3), 
let $\mbi{t}$ be a point in $C_I$. 
For any $\varepsilon >0$, 
$\mbi{t}+\varepsilon \mbi{e}_I$ is in $\varGamma$ and belongs to $C^\circ_I$ by nondivisibility. 
Thus $\mbi{t}$ is in the closure of $C^\circ_I$. 
Since $C_I$ is closed, it coincides to the closure of $C^\circ_I$. 
\end{proof}

A prerealisable polynomial has a remarkable characterization as a function on the semipositive orthant.

\begin{proposition}\label{Gamma_divided}
A tropical polynomial $F=\sum_{I \in \mathcal I} t_I$ satisfies the nondivisibility condition iff 
each $t_I$ solely attains the maximum at some point in $\varGamma$. 
\end{proposition}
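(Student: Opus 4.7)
The plan is to prove the two implications separately, with both being short and using only the semipositivity of the coordinates together with the integer valuation trick already exploited in the proof of the preceding proposition.

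For the forward direction, I would reuse the evaluation at the characteristic point $\mbi{e}_I$. At this point $t_I$ takes the value $\# I$, while any other term $t_J$ with $J \in \mathcal I$ takes the value $\#(I\cap J)$. Nondivisibility rules out $I\subseteq J$ for $J\neq I$, so $I\smallsetminus J\neq\emptyset$ gives is not quite right --- rather, we need $I\not\subseteq J$, which gives $\#(I\cap J)<\# I$. Hence $t_I$ is strictly larger than every other term at $\mbi{e}_I$, showing that $t_I$ solely attains the maximum there.

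For the converse, I would argue by contrapositive. Suppose nondivisibility fails, so that there exist distinct $I,J\in\mathcal I$ with $t_I$ dividing $t_J$, i.e.\ $I\subsetneq J$. Then for every $\mbi t\in\varGamma$ one has
\[
 t_J(\mbi t)=t_I(\mbi t)+\sum_{j\in J\smallsetminus I}t_j\ge t_I(\mbi t),
\]
simply because each $t_j\ge 0$ on $\varGamma$. Thus $t_I$ is never strictly greater than $t_J$ on $\varGamma$, so $t_I$ cannot solely attain the maximum anywhere, contradicting the hypothesis.

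Both steps are essentially immediate from the two distinguished features of the semipositive orthant exploited earlier: the ability to separate subsets of $[n]$ by $0/1$-valued test points such as $\mbi{e}_I$, and the monotonicity $t_I\le t_J$ whenever $I\subseteq J$. I do not expect any genuine obstacle; the only thing to be slightly careful about is formulating "solely attains the maximum'' as the existence of a point where $t_I$ is strictly bigger than every other $t_J$ (equivalently, the non-emptiness of the open cell $C_I^\circ$), rather than as merely being one of the maximizers.
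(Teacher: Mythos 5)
Your proof is correct and follows essentially the same route as the paper: the forward direction is the evaluation at $\mbi{e}_I$ (which the paper delegates to part (1) of the preceding proposition, where the identical computation appears), and the converse is the same contrapositive argument that $I\subsetneq J$ forces $t_I\le t_J$ throughout $\varGamma$ by semipositivity. No gaps; only the garbled sentence in your forward direction should be cleaned up, since the intended claim $\#(I\cap J)<\#I$ for $J\neq I$ is the right one.
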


\begin{proof}
If $F$ satisfies the nondivisibility condition, 
then $C_I^\circ$ is nonempty for all $I$ from the previous proposition.
On the contrary, suppose $I \subset J$, where $I,J \in \mathcal I$. 
Then $\{ \mbi{t} \in \varGamma \ | \ t_I > t_J\}$ is an empty set, 
and we have $C_I^\circ=\emptyset$. 
\end{proof}

\begin{theorem}\label{term_and_connected_component}
Let $F$ be the earliest finishing time of a project network. 
Then $V(F)_{\geq0}$ is the set of points in $\varGamma$ which has two or more critical paths. 
The points in each connected component of $\varGamma_F$ has the same unique critical path. 
Every path becomes a critical path for some time costs. 
\end{theorem}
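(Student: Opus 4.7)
The plan is to read off all three assertions from the cone decomposition $\{C_I^\circ\}_{I \in \mathcal I}$ already set up, using that $F$ is prerealisable by Proposition \ref{prerealisable} and that the cones $C_I^\circ$ are nonempty, open and convex (the first proposition of Section 4). The first claim is essentially a translation of definitions: a point $\mbi{t} \in \varGamma$ lies in $V(F)_{\geq 0}$ iff at least two of the monomials $t_I$ simultaneously attain $F(\mbi{t})$; since each $t_I$ equals the total time cost along the path $I$, this is exactly the condition of possessing two or more critical paths at $\mbi{t}$.

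For the second claim, I would first note that $\varGamma_F = \bigsqcup_{I \in \mathcal I} C_I^\circ$: every point of $\varGamma_F$ has a single strictly maximal term $t_I$ and therefore belongs to exactly one $C_I^\circ$. Each $C_I^\circ$, being an intersection of finitely many open half-spaces with $\varGamma$, is convex and open in $\varGamma$, hence connected; prerealisability guarantees that each is also nonempty. A cover of $\varGamma_F$ by pairwise disjoint nonempty open connected sets is precisely its decomposition into connected components, so the connected components of $\varGamma_F$ are exactly the $C_I^\circ$. On $C_I^\circ$, by definition, $t_I$ is the strict unique maximum, so $I$ is the unique critical path throughout that component.

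The third claim, that every path $I$ becomes critical for some choice of time costs, is then immediate: any point of $C_I^\circ$ realises $I$ as the unique critical path, and the explicit point $\mbi{t} = \mbi{e}_I$ was already verified to lie in $C_I^\circ$ in the proof of the first proposition of Section 4. The only subtlety I anticipate is justifying that distinct $C_I^\circ$ genuinely lie in different connected components, but since we have written $\varGamma_F$ as a disjoint union of open sets the claim is automatic; the whole argument therefore reduces to bookkeeping on top of the earlier cone analysis rather than any new geometric input.
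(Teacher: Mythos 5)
Your proposal is correct and follows essentially the same route as the paper: all three claims are reduced to the decomposition $\varGamma_F=\bigcup_{I\in\mathcal I}C_I^\circ$ together with the nonemptiness, openness and convexity of the cones $C_I^\circ$ established earlier. Your identification of the connected components via the pairwise-disjoint open cover is just a slightly cleaner packaging of the paper's argument (which shows each component lies in some $C_I^\circ$ by continuity and that each $C_I^\circ$ is connected by convexity), so there is nothing substantively different to flag.
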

\begin{proof}
By the definition of critical path and tropical hypersurface, 
$\mbi{t} \in V(F)$ is equivalent to that there are more than one critical paths for the time cost $\mbi{t} \in \varGamma$. 

We show by prerealisablity that $C^\circ_I$ is a connected component of $\varGamma_F$ and $\varGamma_F=\bigcup_{I \in  \mathcal I} C^\circ_I$. 
Tropical monomials are linear functions, hence continuous. 
Thus, the difference of any two tropical monomials in $F$ has a same sign on each connected component $U$ of $\varGamma_F$. 
Hence the maximum term is the same on $U$ and $U$ is contained in some $C^\circ_I$. 
Since $C^\circ_I$ is convex, 
if at $\mbi{t}$ and $\mbi{t}'$ 
the tropical polynomial $F$ attains a same sole maximum term $t_I$, 
the points are connected by a segment in $C^{\circ}_I$. 

The last statement follows from the nonemptiness of $C^\circ_I$. 
\end{proof}

\begin{remark}
If we allow the time costs to take negative values, the previous theorem still remains valid. 
The general version may be useful for possible generalizations like this. 
In the problem of percolation or invasion, 
the AND condition `all the previous activities is finished' in PERT is replaced to the OR condition `at least one previous activity is finished'. 
In this case we have the MIN-plus algebra instead. 
By multiplying $(-1)$ to all the time costs, 
one can regard the problem as a negative-time-costs version of PERT. 
\end{remark}

\section{Adjacency of paths}
When the time costs change, on purpose or by accident, 
a change of critical paths may follows. 
Let us define a graph to monitor the transition. 

Take two maximal paths $I,J$ in a PERT chart with the set of paths $\mathcal I$, 
or in general, two monomial terms $t_I,t_J$ in a prerealisable polynomial $F$. 
We say $C_I,C_J$ are {\it adjacent} (or $I,J$ are {\it adjacent}), 
if $C_I,C_J$ have a common codimension-one wall. 
When a critical path changes, some of the adjacent paths always attain the maximum (at least in a moment). 

Let $G(F)$ be a non-oriented graph with the vertex set $\mathcal I$ and the edges between adjacent vertices. 
We call $G(F)$ the {\it adjacency graph} of $F$.
By Proposition \ref{Gamma_divided}, we identify each vertex $I$ with the chamber $C_I$. 


\begin{example}\label{complete}
$F_{n,1}=t_1 \oplus \dots \oplus t_n$ in Corollary \ref{realisable} is realisable as a parallel chart. 
For every $i,j\in [n]$, $C_{\{i\}}$ and $C_{\{j\}}$
are adjacent at neighbourhood of a point $\mbi{e}_{\{i,j\}}$ along the codimension-one wall $\{ t_i=t_j \}$.
Thus we have $G(F_{n,1})=K_n$, the complete graph with $n$ vertices.

For $2 \le k \le n-1$, chambers $C_I, C_J$ of $F_{n,k}$ are adjacent iff $\#(I \cap J)=k-1$, 
which is because $C_I=\bigcap_{\substack{i \in I, \, j \in [n] \setminus I}} \{\mbi{t} \in \varGamma \ | \ t_i \ge t_j\}$.
\end{example}

\begin{example}
Let $F=\sum_{i=1}^{n-1} t_it_{i+1} \ (n \ge 2)$. 
Then $G(F)$ is $K_{n-1}$, again a complete graph. 
Note that $F$ corresponds to a zigzag PERT chart having $n$ activities with
the beginning (resp. ending) nodes being labelled odd (resp. even) and the activities are connected 
by an edge iff their labels differ by one.
\end{example}

Let $N(F)$ be the 1-skeleton of the Newton subdivision of $F$ (\cite{IMS}). 
Then $G(F)$ is a subgraph of $N(F)$ because the Newton subdivision of $F$ is dual to $V(F)$. 
Note that the Newton subdivision of $F$ equals the Newton polytope of $F$ 
since all the coefficients of $F$ are the same. 

%
%

Since the number of the vertices of $G(F)$ equals $\# \mathcal I$, 
the number of vertices of $G(F)$ and $N(F)$ coincides.
However, some edges in $N(F)$ may not appear in $G(F)$.

\begin{example}
Let $F$ be a prerealisable tropical polynomial
$$ F=t_1t_3t_6 \oplus t_1t_4t_5 \oplus t_1t_4t_6 \oplus t_2t_3t_6 \oplus t_2t_5.$$
Then $F$ is realisable, 
since there exists a corresponding PERT chart drawn left side of Figure 2. 
The right figure is $G(F)$, 
where boxes represents vertices with $I$ written inside, 
and the the bold lines are the edges (which was obtained by calculating $V(F)$). 
However, $N(F)$ contains the dotted lines too. 
In fact, $G(F)$ cannot be obtained as the 1-skeleton of a Newton subdivision of any prerealisable polynomial. 
This is because for any polytope of dimension $d$, the degree of each vertex must be bigger than $d$. 
Thus if there exists a prerealisable polynomial $F'$ satisfying $N(F')=G(F)$, 
$N(F')$ must be a polygon since there exists a vertex of degree 2. 
Then every vertex of $N(F')$ must be of degree 2, 
whereas $G(F)$ has a vertex of degree 3. 

Now we explain an important feature of adjacency. 
Let $\{1,4,6\}$ be the current critical path. 
Suppose we decrease the time cost of the activity $1$. 
Then the critical path may change. 
The new one should be $\{2,3,6\}$, 
because that is the only path which is adjacent to $\{1,4,6\}$ and does not contain $1$ as seen in the adjacency graph. 
Or suppose the time cost of the activity $5$ has suddenly risen up. 
Then $\{1,4,5\}$, not $\{2,5\}$, is the only possibility. 
\end{example}

\begin{figure}[t]
\includegraphics{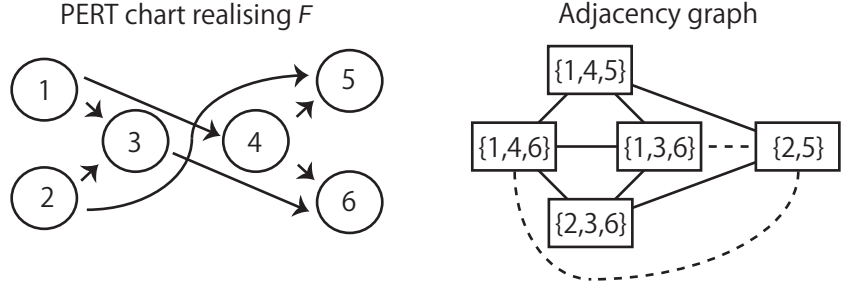}
\caption{$N(F)\neq G(F)$}
\end{figure}

One should study $V(F)$ to obtain $G(F)$, which 
is not very easy in general. 
When the PERT chart has some structure, one might obtain $G(F)$ accordingly. 
We shall discuss three cases: independency, product structure and homogeneity. 

First, an independent activity increases the connectivity of the adjacency graph. 
\begin{proposition}
If a term $t_I$ of $F$ contains a variable $t_i$ which does not appear 
in any other terms, then the vertex $I$ of $G(F)$ is connected to
all other vertices.
\end{proposition}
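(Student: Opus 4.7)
The plan is to show that for every $J \in \mathcal{I}$ with $J \neq I$, the chambers $C_I$ and $C_J$ share a codimension-one wall, which by definition makes them adjacent in $G(F)$. The central observation is that the variable $t_i$ appears in $t_I$ but in no other term, so translation by $\mbi{e}_i$ changes the value of $t_I$ linearly while leaving every other term $t_K$ unchanged.

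First I would fix a $J \neq I$ in $\mathcal{I}$ and pick a starting point $\mbi{t}_0 \in C_J^\circ$ with all coordinates strictly positive. Such a point exists, since by Proposition \ref{Gamma_divided} (combined with the openness built into the definition of $C_J^\circ$) the set $C_J^\circ$ is a nonempty open cone inside $\varGamma$, and one may take a representative lying in the interior of the orthant. At $\mbi{t}_0$ the term $t_J$ strictly dominates every other term, in particular $t_J(\mbi{t}_0) > t_I(\mbi{t}_0)$.

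Next I would slide along the ray $\mbi{t}(s) = \mbi{t}_0 + s\mbi{e}_i$ for $s \geq 0$. Because $i$ appears only in the term $t_I$, one has $t_I(\mbi{t}(s)) = t_I(\mbi{t}_0) + s$ while $t_K(\mbi{t}(s)) = t_K(\mbi{t}_0)$ for every other $K \in \mathcal{I}$. Setting $s^{\ast} = t_J(\mbi{t}_0) - t_I(\mbi{t}_0) > 0$ and $\mbi{p} = \mbi{t}(s^{\ast})$, one obtains $t_I(\mbi{p}) = t_J(\mbi{p})$, with this common value strictly exceeding $t_K(\mbi{p})$ for every $K \notin \{I,J\}$. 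Hence $\mbi{p}$ lies in $C_I \cap C_J$ while every other defining inequality is strict at $\mbi{p}$.

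To conclude adjacency I would verify that this shared face has codimension exactly one. By continuity the strict inequalities $t_I, t_J > t_K$ persist on an open neighborhood $U$ of $\mbi{p}$ inside the interior of $\varGamma$, so $C_I \cap C_J \cap U$ coincides with $\{t_I = t_J\} \cap U$. Since $i \in I \setminus J$, the linear forms $t_I$ and $t_J$ are distinct, so $\{t_I = t_J\}$ is a genuine affine hyperplane of dimension $n - 1$. The one delicate step is this final dimension check; it is precisely where the hypothesis that $t_i$ appears in no other term is used, since that is what allows us to reach the wall $\{t_I = t_J\}$ starting from any chamber $C_J^\circ$ by a one-parameter move without activating any competing inequality.
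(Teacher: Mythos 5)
Your proof is correct and follows essentially the same route as the paper's: the paper likewise starts at an interior point of $C_J$ (it uses $\mbi{e}_J$) and slides along the ray in the direction $\mbi{e}_i$, which raises only the term $t_I$, until $t_I=t_J$ while every other term remains strictly smaller. Your extra care with the codimension-one wall and with choosing a strictly positive base point is a refinement of, not a departure from, the paper's argument.
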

\begin{proof}
Let $t_J$ be another term of $F$. 
$\mbi{e}_J$ is an interior point of $C_J$. 
Take the point $\mbi{t}=\mbi{e}_J + s\mbi{e}_i$ for $s \geq 0$. 
When $s$ increases from $0$, 
$t_I$ is the only term in $F$ whose value at $\mbi{t}$ gets larger. 
Eventually we get $t_I=t_J$. 
Those $t_I,t_J$ are the only terms that attain the maximum at that $\mbi{t}$. 
Thus $C_I$ and $C_J$ are adjacent.
\end{proof}

Next we treat the product. 
For two undirected graphs $G_1=(V_1, E_1), G_2=(V_2, E_2)$, we define the Cartesian product $G_1 \square G_2$ as in \cite{IK}, 
that is:
\begin{enumerate}
 \item the vertex set being $V_1 \times V_2$,
 \item $\{(v_1, v_2), \ (v'_1, v'_2)\}$ is an edge of $G_1 \square G_2$ iff
       $$\{v_1, v'_1\} \in E_1, \ v_2=v'_2 \ \ \ \text{or} \ \ \ v_1=v'_1, \ \{v_2, v'_2 \} \in E_2.$$
\end{enumerate}

\begin{proposition}
Let $F_1, F_2$ be polynomials with no common variables. Then 
$$G(F_1F_2)=G(F_1) \square G(F_2).$$
\end{proposition}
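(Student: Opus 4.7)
The plan is to lift the Cartesian product structure from the graphs to the chamber complexes. I would write $F_1$ on variables indexed by a set $A$ and $F_2$ on variables indexed by a disjoint set $B$. Since the variable sets are disjoint, the tropical product expands as
$$F_1 F_2 = \sum_{(I,J) \in \mathcal{I}_1 \times \mathcal{I}_2} t_{I \cup J},$$
so the terms of $F_1 F_2$ biject with $\mathcal{I}_1 \times \mathcal{I}_2$, which matches the vertex set of $G(F_1) \square G(F_2)$. Before invoking the Section 4 chamber machinery, I would first check that $F_1 F_2$ is prerealisable: an inclusion $I \cup J \subset I' \cup J'$ separates across $A$ and $B$ into $I \subset I'$ and $J \subset J'$, and nondivisibility of each $F_i$ then forces equality.

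Next I would establish the key chamber-product identity
$$C_{I \cup J} = C_I \times C_J$$
inside $\varGamma \subset \R^{|A|}_{\geq 0} \times \R^{|B|}_{\geq 0}$. Writing a point as $(\mbi{a},\mbi{b})$ and using $t_{I \cup J}(\mbi{a},\mbi{b}) = t_I(\mbi{a}) + t_J(\mbi{b})$, the defining inequalities of $C_{I \cup J}$ specialised to $J'=J$ recover those cutting out $C_I$, and dually for $C_J$; conversely, summing pairs of such inequalities gives back every defining inequality of $C_{I \cup J}$.

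With the product formula in hand, adjacency reduces to a dimension count on
$$(C_I \times C_J) \cap (C_{I'} \times C_{J'}) = (C_I \cap C_{I'}) \times (C_J \cap C_{J'}).$$
When $I=I'$ and $J \neq J'$, the intersection has dimension $|A| + \dim(C_J \cap C_{J'})$, so it is codimension one in $\R^n$ precisely when $J, J'$ are adjacent in $G(F_2)$; symmetrically when $J=J'$. When both $I \neq I'$ and $J \neq J'$, the factors have dimension at most $|A|-1$ and $|B|-1$ respectively, so the product has codimension at least two and never produces an adjacency. This is exactly the edge rule defining $G(F_1) \square G(F_2)$.

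The step I expect to require the most care is the bound $\dim(C_I \cap C_{I'}) \leq |A|-1$ for distinct $I, I'$ used in the third case; this rests on the earlier fact that $C_I^\circ$ is the interior of $C_I$ together with the disjointness of the open chambers (each $\mbi{e}_I$ lies in a unique open chamber by nondivisibility). Once these observations are explicit, the rest of the argument is routine bookkeeping of the product decomposition.
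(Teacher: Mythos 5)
Your proof is correct and follows essentially the same route as the paper: both rest on the fact that the chamber geometry of $F_1F_2$ is the Cartesian product of the chamber geometries of the factors (the paper phrases this as $V(F_1F_2)_{\geq 0} = (V(F_1)_{\geq 0}\times \R_{\geq 0}^m)\cup(\R_{\geq 0}^n\times V(F_2)_{\geq 0})$, you phrase it as $C_{I\cup J}=C_I\times C_J$, and these are equivalent). Your version simply makes explicit the prerealisability check and the codimension bookkeeping that the paper leaves implicit after its one displayed identity.
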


\begin{proof}
Let the number of variables of $F_1, F_2$ be $n, m$, respectively. Since
$$V(F_1 F_2)_{\ge 0}=\bigl( V(F_1)_{\ge 0} \times \R_{\ge 0}^m \bigr) \cup
                         \bigl( \R_{\ge 0}^n \times V(F_2)_{\ge 0} \bigr) $$
holds, we have the proposition.
\end{proof}

\begin{remark}
If $F_1, \ F_2$ are realisable, then one can also realise $F_1 F_2$
by connecting every ending node of the PERT chart of $F_1$ to all the starting nodes of
the PERT chart of $F_2$.
\end{remark}

The Cartesian product of $n$ copies of $K_2$ is called the {\it $n$-hypercube graph\/} and denoted by $Q_n$ (\cite{IK}).

\begin{corollary}
$Q_n$ is the adjacency graph of the realisable tropical polynomial
$$F=\prod_{k=1}^n (t_{i_k}\oplus t_{j_k}).$$
\end{corollary}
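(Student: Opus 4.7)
The plan is a short induction on $n$, combining the base case from Example \ref{complete} with the Cartesian-product proposition for adjacency graphs proved immediately above. I take for granted the implicit hypothesis that the indices $i_1,j_1,\ldots,i_n,j_n$ are pairwise distinct; otherwise the product is not prerealisable and the statement does not make sense.

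For the base case $n=1$, the polynomial is simply $t_{i_1}\oplus t_{j_1}$, which up to relabelling is $F_{2,1}$. By Example \ref{complete} its adjacency graph is $K_2=Q_1$.

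For the inductive step, I would factor $F=F'\cdot (t_{i_n}\oplus t_{j_n})$ with $F'=\prod_{k=1}^{n-1}(t_{i_k}\oplus t_{j_k})$. Because all index sets are disjoint, $F'$ and $t_{i_n}\oplus t_{j_n}$ share no variable, so the preceding proposition gives
$$G(F)=G(F')\,\square\, G(t_{i_n}\oplus t_{j_n})=Q_{n-1}\,\square\, K_2=Q_n,$$
using the inductive hypothesis and the definition of $Q_n$ as the Cartesian product of $n$ copies of $K_2$. For realisability, each binary factor $t_{i_k}\oplus t_{j_k}$ is realisable as two parallel activities (a special case of Corollary \ref{realisable}); the remark preceding the corollary then produces a PERT chart for the full product by iteratively connecting every ending node of one factor's chart to all starting nodes of the next.

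There is essentially no obstacle beyond bookkeeping: the hard work was done in the proposition $G(F_1F_2)=G(F_1)\,\square\, G(F_2)$ and in Example \ref{complete}. The only point one must be slightly careful about is the disjoint-variables hypothesis required to apply the product proposition at each stage of the induction, which is automatic here since the $2n$ indices are assumed distinct.
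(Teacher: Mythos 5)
Your proof is correct and follows essentially the same route as the paper: the base case $G(F_{2,1})=K_2$ from Example \ref{complete} combined with iterated application of the Cartesian-product proposition for disjoint-variable factors. Your explicit induction and the remarks on realisability and distinctness of indices merely spell out what the paper's two-line proof leaves implicit.
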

\begin{proof}
By Example \ref{complete}, $K_2$ is the adjacency graph of $F_{2,1}$. 
Therefore, $Q_n = (K_2)^{\square n}$ is that of the $n$ product of $F_{2,1}$ with independent variables. 
\end{proof}

Finally we treat the homogeneity. 
\begin{proposition}\label{G-N}
If $F$ is homogeneous, then
$G(F)=N(F)$.
\end{proposition}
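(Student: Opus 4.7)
The plan is to prove the nontrivial inclusion $N(F) \supset G(F)$... actually, to recall, $G(F) \subset N(F)$ was already noted (tropical hypersurfaces are dual to the Newton subdivision), so the task is the reverse inclusion $N(F) \subset G(F)$: every edge of the Newton polytope must be realised by an adjacency of chambers. Under homogeneity, every $I \in \mathcal{I}$ has the same cardinality $k$, so all vertices $\mbi{e}_I$ of the Newton polytope $P = \mathrm{conv}\{\mbi{e}_I : I \in \mathcal{I}\}$ lie on the hyperplane $\{\sum_i t_i = k\}$, and the Newton subdivision collapses to $P$ itself because every coefficient of $F$ is the tropical unit.

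Given an edge $[\mbi{e}_I, \mbi{e}_J]$ of $P$, I would invoke the standard description of polytope edges: there is a linear functional on $\R^n$, say $\mbi{x} \mapsto \langle \mbi{t}_0, \mbi{x} \rangle$ for some $\mbi{t}_0 \in \R^n$, whose restriction to $P$ attains its maximum exactly on that edge. Since $\langle \mbi{t}_0, \mbi{e}_K \rangle$ is by definition the value of the tropical monomial $t_K$ at $\mbi{t}_0$, this reads $t_I(\mbi{t}_0) = t_J(\mbi{t}_0) > t_K(\mbi{t}_0)$ for every $K \in \mathcal{I} \setminus \{I, J\}$.

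The next step, and in my view the decisive one, is to translate $\mbi{t}_0$ into the open positive orthant without altering these inequalities. Put $\mbi{t} := \mbi{t}_0 + c(1, 1, \ldots, 1)$ for a large $c > 0$. By homogeneity, each term's value is shifted by exactly $ck$, so the strict inequalities persist verbatim. For $c$ large enough, $\mbi{t}$ lies in the interior of $\varGamma$. This is the step that forces the hypothesis: without homogeneity, terms of different degrees would respond differently to a uniform shift, so the joint maximum at $\mbi{t}_0$ could be destroyed, and there would be no mechanism to push the supporting functional into $\varGamma$.

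Finally, I would pick a small Euclidean ball around $\mbi{t}$ that is still contained in $\varGamma$ and on which the strict inequalities $t_I > t_K$ and $t_J > t_K$ survive for $K \neq I, J$. Intersecting this ball with the linear hyperplane $\{t_I = t_J\}$ produces an $(n-1)$-dimensional open set sitting inside $C_I \cap C_J$. Hence $C_I \cap C_J$ is an $(n-1)$-dimensional face common to the two $n$-dimensional cones $C_I, C_J$, which is the codimension-one wall required to make $I$ and $J$ adjacent in $G(F)$. This gives $N(F) \subset G(F)$ and, combined with the already-known inclusion, the equality $G(F) = N(F)$.
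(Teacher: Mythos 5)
Your proof is correct and uses the same key mechanism as the paper: homogeneity makes every term's value shift by the same amount $c\cdot\deg F$ under translation by $c(1,\dots,1)$, so any wall of $V(F)$ in $\R^{n}$ (equivalently, any edge of the Newton polytope, via duality) can be pushed into the interior of $\varGamma$, giving $N(F)\subset G(F)$. The paper states this in one terse sentence; you have merely made explicit the supporting-functional description of polytope edges and the final dimension count, which the paper leaves implicit.
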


\begin{proof}
Put $\deg F=d$, and fix a point $\mbi{t}$. Then the value of each term
of $F$ at $\mbi{t}+(a, \dots, a)$ is exactly $a \cdot d$ bigger than the value at $\mbi{t}$,
so if two chambers are adjacent, they are also adjacent within $\varGamma$.
\end{proof}

For a prerealisable tropical polynomial $F=\sum_{I \in \mathcal I} t_I$, 
put $F^{\vee}=\sum_{I \in \mathcal I} t_{[n] \setminus I}$. 

\begin{lemma}
$F^{\vee}$ is prerealisable.
\end{lemma}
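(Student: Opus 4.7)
The plan is to verify directly that $F^\vee$ meets the three conditions in the definition of prerealisability, exploiting the fact that complementation $I \mapsto [n]\setminus I$ is an inclusion-reversing bijection on $2^{[n]}$.

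First I would observe that each term $t_{[n]\setminus I}$ is a squarefree monomial, so the degree of $F^\vee$ in any single variable is at most one; condition (1) follows. Condition (2) is immediate since the coefficient of every term in $F^\vee$ is the same tropical unit (namely $0$) inherited from $F$.

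The only substantive point is the nondivisibility condition (3). Suppose for contradiction that two distinct terms $t_{[n]\setminus I}$ and $t_{[n]\setminus J}$ of $F^\vee$ satisfy $t_{[n]\setminus I} \mid t_{[n]\setminus J}$, where $I,J \in \mathcal I$. Since the monomials are squarefree, this divisibility is equivalent to the set inclusion $[n]\setminus I \subset [n]\setminus J$, which in turn is equivalent to $J \subset I$. By the nondivisibility of $F$, this forces $J=I$, hence $[n]\setminus I = [n]\setminus J$, contradicting the assumed distinctness. So no term of $F^\vee$ divides another.

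I do not expect any real obstacle here: the lemma is essentially the statement that the antichain property of $\mathcal I$ inside the Boolean lattice $2^{[n]}$ is preserved under the order-reversing involution $I \mapsto [n]\setminus I$, and conditions (1) and (2) are obviously preserved since the indexing family has the same cardinality and the monomials remain squarefree with unit coefficients.
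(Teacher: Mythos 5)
Your proof is correct and follows essentially the same route as the paper's: complementation reverses inclusion, so $[n]\setminus I \subset [n]\setminus J$ gives $J \subset I$, and the nondivisibility of $F$ then forces $I=J$. The only difference is that you spell out conditions (1) and (2), which the paper leaves implicit.
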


\begin{proof}
If $[n] \setminus I \subset [n] \setminus J$ holds, 
$I \supset J$ also holds and the nondivisibility condition of $F$ yields $I=J$. 
Thus we have $[n] \setminus I = [n] \setminus J$. 
\end{proof}

\begin{remark}
$F^{\vee}$ may not be realisable even if $F$ is realisable. 
For instance, 
Corollary \ref{realisable} show that $F_{3,1}$ is realisable, whereas $F_{3,1}^{\vee}=F_{3,2}$ is not.
\end{remark}


\begin{proposition}
If $F$ is homogeneous, then $G(F)=G(F^{\vee})$.
\end{proposition}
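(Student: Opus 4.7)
The plan is to reduce the statement to an equality of Newton polytopes via Proposition \ref{G-N}. First I would verify that $F^\vee$ is itself homogeneous, which is immediate: if every $I \in \mathcal I$ has cardinality $k$, then every $[n]\setminus I$ has cardinality $n-k$, so $F^\vee$ is homogeneous of degree $n-k$. Since $F^\vee$ is also prerealisable by the preceding lemma, Proposition \ref{G-N} applies to both sides, yielding $G(F)=N(F)$ and $G(F^\vee)=N(F^\vee)$. Hence it suffices to prove $N(F)=N(F^\vee)$ under the natural vertex identification $I \leftrightarrow [n]\setminus I$.

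Next I would compare the two Newton polytopes directly. Writing $\mbi{v}_I = \sum_{i \in I} \mbi{e}_i \in \R^n$ for the $0$--$1$ characteristic vector of $I$, the vertices of the Newton polytope of $F$ are exactly $\{\mbi{v}_I : I \in \mathcal I\}$, while those of $F^\vee$ are $\{\mbi{v}_{[n]\setminus I} : I \in \mathcal I\} = \{\mbi{1} - \mbi{v}_I : I \in \mathcal I\}$, where $\mbi{1}=(1,\ldots,1)$. Thus the two vertex sets are related by the affine involution $\mbi{v} \mapsto \mbi{1}-\mbi{v}$ of $\R^n$, which carries one polytope onto the other bijectively and preserves the full face lattice. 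In particular the $1$-skeletons coincide under $I \leftrightarrow [n]\setminus I$.

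Finally, since all coefficients of both $F$ and $F^\vee$ are equal (they are units in the tropical sense), the Newton subdivisions collapse to the Newton polytopes themselves, as already noted in the paper right after the definition of $N(F)$. Combining the three steps gives $G(F)=N(F)=N(F^\vee)=G(F^\vee)$.

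The only subtle point, and therefore the main thing to keep honest about, is that Proposition \ref{G-N} is being invoked twice and it genuinely requires homogeneity on each side; the observation that $F^\vee$ inherits homogeneity from $F$ is what makes the duality symmetric and the whole argument go through. The rest is a straightforward affine symmetry of the hypersimplex-type polytope spanned by the characteristic vectors of the index sets in $\mathcal I$.
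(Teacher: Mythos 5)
Your proof is correct, and its overall skeleton matches the paper's: both reduce the claim to the equality $N(F)=N(F^{\vee})$ and then invoke Proposition \ref{G-N} on each side. Where you differ is in how that equality is established. The paper works on the hypersurface side: it observes that $F^{\vee}$ equals $t_{[n]}\otimes \sum_I t_I^{-1}$, deduces that $V(F)$ and $V(F^{\vee})$ are symmetric about the origin (monomial factors not affecting the corner locus), and then transfers this to the Newton polytopes via duality. You instead argue directly on the polytope side: the exponent vectors of $F^{\vee}$ are the images of those of $F$ under the affine involution $\mbi{v}\mapsto\mbi{1}-\mbi{v}$, which preserves the face lattice and hence the $1$-skeleton under the identification $I\leftrightarrow [n]\setminus I$. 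Your route is more elementary in that it avoids the detour through corner loci and the duality theorem, using only the fact that an affine bijection of $\R^n$ carries a polytope to a polytope; the paper's route has the mild advantage of producing the stronger geometric statement that the two tropical hypersurfaces themselves are mirror images, not merely that their dual skeletons agree. You are also slightly more careful than the paper on one point: Proposition \ref{G-N} must be applied to $F^{\vee}$ as well as to $F$, so one needs $F^{\vee}$ to be homogeneous (of degree $n-k$ when $\deg F=k$); you state and check this explicitly, whereas the paper leaves it implicit.
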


\begin{proof}
For a point $p$, denote the point symmetric about $\mbi{0}=(0,\dots,0)$ as $-p$. 
Also denote the function taking the inverse value of a tropical monomial function $t_I$ with respect
to the tropical multiplication (usual addition) as $t_I^{-1}$. 

Since $t_I$ defines a linear function passing through $\mbi{0}$, its graph is symmetric about $\mbi{0}$.
Thus the value of $t_I^{-1}$ at $-p$ and the value of $t_I$ at $p$ are the same.

Tropically adding these monomials, we obtain $F=\sum_I t_I$ and $F'=\sum_I t_I^{-1}$.
The value of $F$ at $p$ coincides to the value of $F'$ at $-p$, 
so the corner locus of $F$ and the corner locus of $F'$ are symmetric about $\mbi{0}$. 
By tropically multiplying $t_{[n]}$ to $F'$, we have $F^{\vee}$ because $t_I^{-1} \otimes t_{[n]}=t_{[n] \setminus I}$. 
Since tropically multiplying a monomial does not change the corner locus, 
$V(F)$ and $V(F^{\vee})$ are symmetric about $\mbi{0}$. 
Then we have $N(F)=N(F^{\vee})$ because of the duality between the Newton subdivision and the tropical hypersurface, 
and the proposition follows from Proposition \ref{G-N}.
\end{proof}

Since $F_{n,k}^{\vee}=F_{n,n-k}$ holds, we have the following corollary.

\begin{corollary}
$G(F_{n,k})$ and $G(F_{n,n-k})$ coincides.
\end{corollary}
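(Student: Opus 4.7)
The plan is to reduce this immediately to the preceding proposition, since $F_{n,k}$ is manifestly homogeneous. First I would observe that $F_{n,k}=\sum_{\#I=k} t_I$ is homogeneous of degree $k$, because every term is a squarefree monomial containing exactly $k$ variables. Hence the hypothesis of the previous proposition is satisfied, and we get $G(F_{n,k})=G(F_{n,k}^{\vee})$ for free.

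Next I would verify the set-theoretic identity $F_{n,k}^{\vee}=F_{n,n-k}$. By the definition $F^{\vee}=\sum_{I\in\mathcal I} t_{[n]\setminus I}$, we have
\[
F_{n,k}^{\vee}=\sum_{\#I=k} t_{[n]\setminus I}.
\]
The map $I\mapsto [n]\setminus I$ is a bijection from the collection of $k$-element subsets of $[n]$ onto the collection of $(n-k)$-element subsets, so the right-hand side rewrites as $\sum_{\#J=n-k} t_J = F_{n,n-k}$. Combining this with the previous equality gives $G(F_{n,k})=G(F_{n,n-k})$, which is the claim.

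There is really no obstacle here: both ingredients (homogeneity and the complementation bijection) are immediate from the definition of $F_{n,k}$, and the substantive content is entirely absorbed into the preceding proposition. The only thing worth being careful about is confirming that $F_{n,k}^{\vee}$ qualifies as a legitimate instance in the preceding proposition's framework, i.e.\ that $F_{n,k}$ is prerealisable so that $F_{n,k}^{\vee}$ is defined and prerealisable; this was already checked when $F_{n,k}$ was introduced before Corollary \ref{realisable} and in the lemma preceding the proposition.
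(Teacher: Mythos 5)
Your proposal is correct and follows exactly the paper's route: the paper also deduces the corollary by noting $F_{n,k}^{\vee}=F_{n,n-k}$ and invoking the preceding proposition on homogeneous polynomials. Your extra care in checking the complementation bijection and prerealisability is sound but adds nothing beyond what the paper leaves implicit.
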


\begin{example}
From the previous corollary and Example \ref{complete}, we obtain
$$G(F_{n,n-1})=G(F_{n,1})=K_n.$$
\end{example}

%
%
%
%
%
%
%
%
%
%

\bibliographystyle{amsplain}

\end{document}